\numberwithin{equation}{section}
\newtheorem{thm}{Theorem}[section]
\newtheorem{prop}[thm]{Proposition}
\theoremstyle{definition}
\newtheorem{rem}[thm]{Remark}
\theoremstyle{remark}
\newcommand{\ds}{\displaystyle}
\newcommand{\R}{\mathbb{R}}
\newcommand{\de}{\partial}
\newcommand{\eps}{\varepsilon}
\newcommand\restr[2]{{
  \left.\kern-\nulldelimiterspace 
  #1 
  \vphantom{ |} 
  \right|_{#2} 
  }}
{\left\{\begin{array}{@{}l@{}}}{\end{array}\right.}
\patchcmd{\abstract}{\scshape\abstractname}{\textbf{\abstractname}}{}{}
\def\@makefnmark{} 
\begin{document}

\title{An optimal bound for nonlinear eigenvalues and torsional rigidity on domains with  holes}
 \author{Francesco Della Pietra$^{*}$
       \thanks{Dipartimento di Matematica e Applicazioni ``R. Caccioppoli'', Universit\`a degli studi di Napoli Federico II, Via Cintia, Monte S. Angelo - 80126 Napoli, Italia.  \newline 
       Email: f.dellapietra@unina.it}
       \\ Gianpaolo Piscitelli$^{\dag}$\thanks{Dipartimento di Ingegneria Elettrica e dell'Informazione  ``M. Scarano'', Universit\`a degli Studi di Cassino e del Lazio Meridionale Via G. Di Biasio n. 43, 03043 Cassino (FR), Italy. \newline Email: gianpaolo.piscitelli@unicas.it}
}
%
\date{}
\maketitle

\begin{abstract}
\noindent{\textsc{Abstract.}} In this paper we prove an optimal upper bound for the first eigenvalue of a Robin-Neumann boundary value problem for the $p$-Laplacian operator in domains with convex holes. An analogous estimate is obtained for the corresponding torsional rigidity problem.\\
\textsc{MSC 2010:} 35P15 - 47J30 - 35J92 - 35J25\\
\textsc{Keywords:}  Nonlinear eigenvalue problems - torsional rigidity - mixed boundary conditions - optimal estimates
\end{abstract}

\section{Introduction}

Let $\beta>0$, and $\Omega$ and $D$ be two bounded open sets, with $D$ convex, $\Omega$ Lipschitz, connected, and $D\Subset \Omega$. We will denote by $\Sigma=\Omega\setminus \bar D$.

Let us consider the following problem:
\begin{equation}
\label{minpb}
\lambda(\beta,\Sigma)=  \min\left\{ \dfrac{\ds\int _{\Sigma}|\nabla \varphi|^p\;dx+\beta\int_{\de D}  |\varphi |^{p}d\sigma}{\ds\int_{\Sigma}|\varphi|^p\;dx},\; \varphi\in W^{1,p}(\Sigma),\;\varphi\not\equiv 0\right\},
\end{equation}
where $\beta$ is a positive constant and $1<p<+\infty$. A minimizer  $u\in W^{1,p}(\Sigma)$ of \eqref{minpb} satisfies 
\begin{equation}\label{case1intro}
\begin{cases}
	-\Delta_p u=\lambda(\beta,\Sigma) |u|^{p-2} u & \mbox{in}\ \Sigma \vspace{0.2cm}\\
|\nabla u|^{p-2}\dfrac{\de u}{\de \nu}=0 &\mbox{on}\ \de\Omega \vspace{0.2cm}\\ 
| \nabla u|^{p-2}\dfrac{\de u}{\de \nu}+\beta |u|^{p-2}u=0 &\text{on }\de D.
\end{cases}
\end{equation}
In this paper we obtain an optimal upper bound on $\lambda(\beta,\Sigma)$, when $\beta>0$ is fixed and $\Sigma=\Omega\setminus \bar D$ 
varies among all domains such that the volume of $\Sigma$ and the $(n-1)$-quermassintegral of $D$ are given (see Section 2.3 for the precise definition). If $n=2$, the geometrical constraint on $D$ corresponds to fix its perimeter. In particular, we show that $\lambda(\beta,\Sigma)$ is maximized by the spherical shell. The first main result is the following.

\begin{thm}
\label{herschn}
	Let $\beta>0$, and $\Omega$ and $D$ be two bounded open sets, with $D$ convex, $\Omega$ Lipschitz, connected, and $D\Subset \Omega$. Let be $\Sigma=\Omega\setminus \bar D$, $A=A_{R_1,R_2}=B_{R_{2}}\setminus \overline{B}_{R_{1}}$, where $B_{R_{i}}$ is a ball centered at the origin with radius $R_{i}$, $i=1,2$. Suppose that $\left|A\right|=\left|\Sigma\right|$, and $W_{n-1}(B_{R_1})=W_{n-1}(D)$. Then,
\begin{equation*}
	\label{in_eig_NR}
	\lambda(\beta,\Sigma)\le \lambda(\beta,A).
\end{equation*}
\end{thm}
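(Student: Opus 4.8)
The plan is to use a web-function/test-function argument that goes in the \emph{opposite} direction from a Faber--Krahn rearrangement: instead of symmetrizing an eigenfunction of $\Sigma$, I will take the (radial) eigenfunction $z$ of the annulus $A$, which is a function of $r=\abs{x}$ alone, and transplant it onto $\Sigma$ to build a competitor $\varphi$ for the minimum problem \eqref{minpb} on $\Sigma$. Since $\lambda(\beta,\Sigma)$ is an infimum over all admissible $\varphi\in W^{1,p}(\Sigma)$, producing any competitor with Rayleigh quotient $\le\lambda(\beta,A)$ proves the theorem. The transplantation should be organised via the distance function $d(x)=\dist(x,D)$: since $D$ is convex, the level sets $\set{d=t}$ are convex, their perimeters $P(t)=P(\set{d<t})$ are nondecreasing, and the coarea formula lets one compare layer by layer with the radial model, where the analogous level sets are concentric spheres $\set{\abs{x}=R_1+t}$ of perimeter $n\omega_n(R_1+t)^{n-1}$. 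The hypothesis $W_{n-1}(B_{R_1})=W_{n-1}(D)$ is exactly the normalisation that makes $P(0)=P(\de D)\ge n\omega_n R_1^{n-1}$, i.e. the holes are calibrated so the perimeter comparison starts off favourably on $\de D$.

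Concretely, I would define $\varphi(x)=\psi(d(x))$ for a profile $\psi\colon[0,L]\to\R$ to be chosen, where $L$ is picked so that $\abs{\set{x\in\Sigma: d(x)<L}}=\abs{\Sigma}$ (one checks such an $L$ exists and is $\le R_2-R_1$ by the volume constraint together with the isoperimetric-type monotonicity of the layers). Taking $\psi$ to be precisely the radial profile of the annular eigenfunction $z$, i.e. $z(x)=\psi(\abs{x}-R_1)$, and using $\abs{\nabla\varphi}=\abs{\psi'(d(x))}$ a.e. (since $\abs{\nabla d}=1$ a.e.), the coarea formula gives
\begin{equation}
\label{plannum}
\int_\Sigma \abs{\nabla\varphi}^p\,dx=\int_0^{L}\abs{\psi'(t)}^p P_\Sigma(t)\,dt,\qquad
\int_\Sigma\abs{\varphi}^p\,dx=\int_0^{L}\abs{\psi(t)}^p \mu_\Sigma(t)\,dt,
\end{equation}
where $P_\Sigma(t)=\mathcal H^{n-1}(\set{x\in\Sigma:d(x)=t})$ and $\mu_\Sigma(t)=-\frac{d}{dt}\abs{\set{d\ge t}\cap\Sigma}$. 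The boundary term is $\beta\int_{\de D}\abs{\varphi}^p\,d\sigma=\beta\abs{\psi(0)}^p P(\de D)$. The corresponding identities for the annulus $A$ read the same with $P_A(t)=n\omega_n(R_1+t)^{n-1}$, $\mu_A(t)=n\omega_n(R_1+t)^{n-1}$, and boundary weight $n\omega_n R_1^{n-1}=P(\de B_{R_1})$. So the theorem reduces to the \emph{one-dimensional} comparison: the Rayleigh quotient built from the weights $(P_\Sigma,\mu_\Sigma,P(\de D))$ is dominated by the one built from $(P_A,\mu_A,P(\de B_{R_1}))$ when $\psi$ is the optimiser of the latter.

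\textbf{The key one-dimensional step and the main obstacle.}
The heart of the matter is thus a weighted Hardy-type inequality: one must show $P_\Sigma(t)\ge P_A(t)$ pointwise (or at least in the integrated sense needed), which would make the numerator in \eqref{plannum} larger for $\Sigma$, while simultaneously controlling the denominator and the boundary term in the right direction. This is where the convexity of $D$ and the quermassintegral normalisation do the real work: by the monotonicity of quermassintegrals along the parallel-set flow (a consequence of the Steiner formula and the Alexandrov--Fenchel inequalities), the inner parallel sets of $D$ expand faster than concentric balls of equal $(n-1)$-quermassintegral, yielding $P_\Sigma(t)\ge P_A(t)$ for all admissible $t$. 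The delicate point, and the step I expect to be hardest, is handling the \emph{denominator}: a naive bound $\mu_\Sigma(t)\ge\mu_A(t)$ would push the Rayleigh quotient the wrong way, so one cannot compare numerator and denominator independently. The resolution is to exploit that $\psi$ solves the Euler--Lagrange ODE for the annular weights --- so $\psi$ is monotone and the combination $\int\abs{\psi'}^p P - \lambda(\beta,A)\int\abs{\psi}^p\mu + \beta\abs{\psi(0)}^p P(\de D)$, viewed as a functional of the weights, can be shown monotone in the "right" order when one moves from $(P_A,\mu_A)$ to $(P_\Sigma,\mu_\Sigma)$, using that $P-\lambda(\beta,A)\,(\text{something})$ has a sign controlled by the ODE. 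I would carry this out by testing the weak formulation of the annular problem against $\varphi$ pulled back, integrating by parts in $t$, and tracking the sign of each boundary and bulk contribution; the volume constraint $\abs{A}=\abs\Sigma$ is used to align the total masses so that the monotone rearrangement of weights is legitimate. Optimality (sharpness) is immediate since equality holds when $\Sigma=A$.
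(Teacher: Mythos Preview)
Your transplantation strategy --- take the radial eigenfunction $z$ on $A$, write it as $\psi(\abs{x}-R_1)$, and test on $\Sigma$ with $\varphi(x)=\psi(d(x))$ --- is exactly the paper's approach. But you have the crucial inequality backwards, and this reversal generates a phantom ``main obstacle'' that does not exist in the correct argument.

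The Aleksandrov--Fenchel inequalities say that among convex bodies with a prescribed $W_{n-1}$, the ball has the \emph{largest} lower quermassintegrals $W_i$, $0\le i<n-1$. In particular $P(D)=nW_1(D)\le nW_1(B_{R_1})=P(B_{R_1})$, the opposite of your claim that ``$P(\de D)\ge n\omega_n R_1^{n-1}$''. More to the point, the Steiner formula for perimeter,
\[
P(D+tB_1)=n\sum_{k=0}^{n-1}\binom{n-1}{k}W_{k+1}(D)\,t^{k},
\]
combined with $W_{k+1}(D)\le W_{k+1}(B_{R_1})$ for $k+1<n$ and $W_n(D)=W_n(B_{R_1})=\omega_n$, gives $P(D+tB_1)\le P(B_{R_1+t})$ for every $t\ge 0$. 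Since the level set $\set{d=t}\cap\Sigma$ is contained in $\de(D+tB_1)$, one gets $P_\Sigma(t)\le P_A(t)$, not $\ge$.

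With the inequality in the correct direction, all three pieces of the Rayleigh quotient line up immediately, and no ODE trick is needed:
\begin{itemize}
\item the gradient term satisfies $\ds\int_\Sigma\abs{\nabla\varphi}^p=\int_0^{R_2-R_1}\abs{\psi'}^p P_\Sigma(t)\,dt\le\int_0^{R_2-R_1}\abs{\psi'}^p P_A(t)\,dt=\int_A\abs{\nabla z}^p$;
\item the boundary term satisfies $\beta\abs{\psi(0)}^p P(D)\le \beta\abs{\psi(0)}^p P(B_{R_1})$;
\item for the denominator, $\mu_\Sigma'(t)=P_\Sigma(t)/\,1\le P_A(t)=\mu_A'(t)$ with $\mu_\Sigma(0)=\mu_A(0)=0$ forces $\mu_\Sigma(t)\le\mu_A(t)$, and then, since $\abs{\Sigma}=\abs{A}$ and $\psi$ is increasing, the layer-cake formula $\int_\Sigma\varphi^p=\int_0^{v_M}pt^{p-1}(\abs{\Sigma}-\mu_\Sigma(\psi^{-1}(t)))\,dt$ gives $\int_\Sigma\varphi^p\ge\int_A z^p$.
\end{itemize}
So numerator and boundary term go down while the denominator goes up, and the Rayleigh quotient of $\varphi$ on $\Sigma$ is at most $\lambda(\beta,A)$ directly. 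Your proposed workaround --- exploiting the annular Euler--Lagrange ODE to salvage a comparison whose pieces point the wrong way --- is addressing a difficulty that disappears once the sign is fixed.

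A minor point: your choice of $L$ is also off. One does not pick $L$ so that $\abs{\set{d<L}}=\abs\Sigma$; one simply takes $L=R_2-R_1$ and extends $\varphi$ by the constant $v_M=\psi(R_2-R_1)$ on $\set{d\ge R_2-R_1}$. The volume constraint $\abs{A}=\abs\Sigma$ is used only in the denominator comparison above.
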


In the case $p=n=2$, Theorem \ref{herschn} recovers a result proved by Hersch in \cite{He}. Our result generalizes it to the $p$-Laplacian operator and to any dimension, giving an answer to a question posed by Henrot in \cite[Chapter 3, Open problem 5]{H}. Actually, we are not able to prove or disprove that the first eigenvalue $\lambda(\beta,\Sigma)$ is maximized on the spherical shell when the perimeter of $D$, and not the $W_{n-1}$, is fixed. We stress that a related result has been recently proved for an optimal insulating problem (see \cite{dnt}).

Optimal estimates for eigenvalues with different Robin or Robin-Neumann boundary conditions have been proved by several authors. For example, if $D=\emptyset$, and a Robin condition on $\de\Omega$ 
is given, with $\beta>0$, a Faber-Krahn inequality has been proved in \cite{B,D} for $p=2$ and in \cite{BD,DF} for $p\in]1,+\infty[$, stating that the first eigenvalue is minimum on the ball of the same volume of $\Omega$. Otherwise, if $\beta$ is negative the problem is still open; in this direction, in \cite{FK} the authors showed that, among planar domains of the same measure, the disk is a maximizer only for small value of the parameter. On the other hand, if the perimeter rather than the volume is fixed, the ball maximizes the first eigenvalue among all open, bounded, convex, smooth enough sets  (see \cite{AFK, BFNT}). 

In this framework, the case of Robin eigenvalue problems when $\beta\ge 0$ is not a constant has been considered in \cite{gt}, and, for example, in \cite{bbn,dpnst,DGK} where optimization with respect to $\beta$ is considered. 

Coming back to the problem with mixed boundary conditions, we recall that the case of a Neumann condition on $\de D$ and Robin condition on $\de \Omega$ has been considered in \cite{PW,ppt}.
 

Another problem we deal with is the $p$-torsional rigidity with the same boundary conditions, namely
\begin{equation}
\label{minpbtor}
T(\beta,\Sigma)=  \max\left\{ \dfrac{\left(\ds\int_{\Sigma}|\varphi|\;dx\right)^{p}}{\ds\int _{\Sigma}|\nabla \varphi|^p\;dx+\beta\int_{\de D}  |\varphi |^{p}d\sigma},\; \varphi\in W^{1,p}(\Sigma),\; \varphi\not\equiv 0\right\},
\end{equation}
with $\beta>0$. The unique function $u_{\Sigma}>0$ such that
\begin{equation}\label{case1introtor}
\begin{cases}
	-\Delta_p u_{\Sigma}=1 & \mbox{in}\ \Sigma \vspace{0.2cm}\\
|\nabla u_{\Sigma}|^{p-2}\dfrac{\de u_{\Sigma}}{\de \nu}=0 &\mbox{on}\ \de\Omega \vspace{0.2cm}\\ 
| \nabla u_{\Sigma}|^{p-2}\dfrac{\de u_{\Sigma}}{\de \nu}+\beta |u_{\Sigma}|^{p-2}u_{\Sigma}=0 &\text{on }\de D,
\end{cases}
\end{equation}
is a maximizer of \eqref{minpbtor}, and
\[
T(\beta,\Sigma)=\left(\int_{\Sigma}u_{\Sigma}\,dx\right)^{p-1}.
\]
Moreover, any maximizer of \eqref{minpbtor} is proportional to $u_{\Sigma}$. 

We prove that the spherical shell minimizes $T(\beta,\Sigma)$ among all domains such that the volume of $\Sigma$ and the $(n-1)$-quermassintegral of $D$ are given. 

\begin{thm}
\label{herschntor}
	Let $\beta>0$, and $\Omega$ and $D$ be two bounded open sets, with $D$ convex, $\Omega$ Lipschitz, connected, and $D\Subset \Omega$. Let be $\Sigma=\Omega\setminus \bar D$, $A=A_{R_1,R_2}=B_{R_{2}}\setminus \overline{B}_{R_{1}}$, where $B_{R_{i}}$ is a ball centered at the origin with radius $R_{i}$, $i=1,2$. Suppose that $\left|A\right|=\left|\Sigma\right|$, and $W_{n-1}(B_{R_1})=W_{n-1}(D)$. Then,
\begin{equation*}
	\label{in_eig_NR_tor}
	T(\beta,\Sigma)\ge T(\beta,A).
\end{equation*}
\end{thm}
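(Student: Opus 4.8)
The plan is to mimic the scheme used for Theorem \ref{herschn}, replacing the Rayleigh-quotient test function argument by a rearrangement argument on the torsion function itself. The torsional rigidity admits the variational characterization $T(\beta,\Sigma)=\big(\int_\Sigma u_\Sigma\,dx\big)^{p-1}$, so it suffices to produce, from the torsion function $u_A$ of the spherical shell $A$, an admissible competitor $\psi\in W^{1,p}(\Sigma)$ in \eqref{minpbtor} for which
\[
\frac{\big(\int_\Sigma \psi\,dx\big)^p}{\int_\Sigma |\nabla\psi|^p\,dx+\beta\int_{\partial D}|\psi|^p\,d\sigma}\;\ge\; T(\beta,A)=\frac{\big(\int_A u_A\,dx\big)^p}{\int_A |\nabla u_A|^p\,dx+\beta\int_{\partial B_{R_1}}|u_A|^p\,d\sigma}.
\]
Since $u_A$ is radial and (by the sign of the source term and the Robin condition) radially decreasing in $|x|$, write $u_A(x)=g(|x|)$ with $g$ nonincreasing on $[R_1,R_2]$, and define $\psi$ on $\Sigma$ by $\psi(x)=g(\rho(x))$, where $\rho(x)$ is the appropriate ``distance-type'' radial variable adapted to $\Sigma$ (the same web-function/parallel-coordinate construction used in the proof of Theorem \ref{herschn}); concretely one foliates $\Sigma$ by the level sets of the distance to $D$, using that $D$ is convex, and one transplants $g$ along these leaves so that the value on $\partial D$ matches $g(R_1)$.

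The three quantities to control are the numerator $\int\psi$, the Dirichlet energy $\int|\nabla\psi|^p$, and the boundary term $\beta\int_{\partial D}|\psi|^p$. For the boundary term, since $\psi\equiv g(R_1)=u_A|_{\partial B_{R_1}}$ on $\partial D$ and $W_{n-1}(D)=W_{n-1}(B_{R_1})$ forces (by the Aleksandrov--Fenchel inequalities, exactly as invoked for Theorem \ref{herschn}) $\mathcal H^{n-1}(\partial D)\le \mathcal H^{n-1}(\partial B_{R_1})$, one gets $\beta\int_{\partial D}|\psi|^p\,d\sigma\le \beta\int_{\partial B_{R_1}}|u_A|^p\,d\sigma$. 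For the gradient term, the coarea formula together with the convexity of $D$ (which makes the inner parallel sets shrink no faster than concentric spheres, equivalently the relevant isoperimetric-type monotonicity of $\mathcal H^{n-1}$ of the level sets) yields $\int_\Sigma|\nabla\psi|^p\,dx\le \int_A|\nabla u_A|^p\,dx$, after using $|A|=|\Sigma|$ to match the ranges of the radial variable. For the numerator one needs the reverse inequality $\int_\Sigma\psi\,dx\ge \int_A u_A\,dx$: here the volume constraint $|\Sigma|=|A|$ plus the fact that $g$ is nonincreasing and that $\psi$ is, in the radial variable, a monotone rearrangement of $u_A$ that spreads at least as much mass outward, gives $\int_\Sigma\psi=\int_A u_A$ (one should in fact arrange the transplantation so that the distribution function of $\psi$ dominates that of $u_A$, whence equality or an inequality in the favorable direction). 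Combining the three estimates gives the desired bound on the Rayleigh-type quotient, hence $T(\beta,\Sigma)\ge T(\beta,A)$.

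The main obstacle is the simultaneous control of numerator and denominator under a single transplantation map: the gradient and boundary terms want the level sets of $\psi$ to be ``small'' (favoring $D$ convex, via quermassintegral monotonicity), while the numerator wants the mass to be pushed outward; one must check these are compatible, i.e. that the web-function $\psi$ built from the distance function to $D$ simultaneously satisfies $\int|\nabla\psi|^p\le\int|\nabla u_A|^p$ and $\int\psi\ge\int u_A$. This is where the convexity of $D$ and the choice $W_{n-1}$ (rather than perimeter) as the constraint are essential, and it is exactly the point already handled in the proof of Theorem \ref{herschn}; the extra work here is only the elementary monotonicity of the one-dimensional weighted functionals $r\mapsto\int g$, $r\mapsto\int |g'|^p r^{n-1}$ under the induced reparametrization, which I would dispatch by a direct Pólya--Szegő-type / Hardy-type one-variable computation.
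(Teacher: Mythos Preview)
Your overall plan---transplant the radial torsion function $u_A$ onto $\Sigma$ via the distance to $D$ and compare numerator, gradient energy, and boundary term separately---is exactly the paper's approach: the authors simply say the proof ``follows line by line the proof of Theorem~\ref{herschn} making use of the stress function $v_A$ instead of the first eigenfunction on the spherical shell.''

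There is, however, a substantive error in your setup that propagates through two of your three estimates. You assert that $u_A(x)=g(|x|)$ with $g$ \emph{nonincreasing}; in fact the radial torsion function on $A$ is strictly \emph{increasing} (Proposition~2.5: integrating $-(|\Psi'|^{p-2}\Psi'\,r^{n-1})'=r^{n-1}>0$ from $r$ to $R_2$ with the Neumann condition $\Psi'(R_2)=0$ gives $\Psi'>0$ on $(R_1,R_2)$). Consequently, the foliation of $\Sigma$ by level sets of $d(\cdot,D)$ produces \emph{outer} parallel bodies $D+\rho B_1$, not the ``inner parallel sets'' you invoke, and the gradient bound comes from $P(D+\rho B_1)\le P(B_{R_1}+\rho B_1)$ via the Steiner formula and Aleksandrov--Fenchel (this is inequality~\eqref{perimetro_vivo} in the proof of Theorem~\ref{herschn}). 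Likewise, your heuristic for the numerator (``mass pushed outward'' for a decreasing profile) does not survive the sign flip; the correct mechanism is that the \emph{sublevel} sets satisfy $\mu(t)=|\{w<t\}\cap\Sigma|\le|\{v<t\}\cap A|=\eta(t)$ (their $t$-derivatives are the level-set perimeters divided by the common slope, and $\mu(0)=\eta(0)=0$), so the superlevel sets obey the reverse inequality and the layer-cake formula yields $\int_\Sigma w\ge\int_A u_A$. Once you correct the monotonicity, your three-term comparison is precisely the argument of Theorem~\ref{herschn} with the $L^p$-norm in the denominator replaced by the $L^1$-norm in the numerator, and no additional ``P\'olya--Szeg\H{o}-type one-variable computation'' is needed.
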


Regarding optimal estimates related to the torsional rigidity when $D=\emptyset$ and a Robin boundary condition is given on $\de\Omega$, a Saint-Venant inequality can be proved: we refer the reader to \cite{ant,bg}.

Finally, we recall that in this context, estimates have been obtained also for a more general class of problems, involving the so called Finsler operator. We refer the reader, for example, to \cite{dgR,DGP2,PT}.

The structure of the paper is the following. In the second section, we prove some properties of the first eigenvalue $\lambda(\beta,\Sigma)$ and of the torsional rigidity $T(\beta,\Sigma)$, as well as we recall some basic tool of convex analysis. In the third section, we prove the main results.

\section{Notation and preliminaries}
\subsection{Eigenvalue problem}
The following result can be proved by a standard argument of Calculus of Variations.
\begin{prop}
Let $\beta>0$, $D,\Omega$ Lipschitz domains with $D\Subset \Omega$, and $\Sigma=\Omega\setminus \bar D$. 
\begin{itemize}
\item There exists a positive minimizer $u\in W^{1,p}(\Sigma)$ of \eqref{minpb}. Moreover, $u$ is a solution of \eqref{case1intro}. 
\item The eigenvalue $\lambda(\beta,\Sigma)$ is simple, that is all the associated eigenfunctions are scalar multiple of each other.
\end{itemize}

\end{prop}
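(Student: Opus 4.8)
The plan is to establish the existence of a positive minimizer and the Euler--Lagrange equation \eqref{case1intro} by the direct method in the calculus of variations, and then to prove simplicity by a convexity/Picone-type argument.

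\textbf{Existence.} First I would take a minimizing sequence $(\varphi_k)\subset W^{1,p}(\Sigma)$ for the Rayleigh quotient in \eqref{minpb}, normalized so that $\|\varphi_k\|_{L^p(\Sigma)}=1$. Then the numerators $\int_\Sigma|\nabla\varphi_k|^p\,dx+\beta\int_{\de D}|\varphi_k|^p\,d\sigma$ are bounded; since $\|\varphi_k\|_{L^p(\Sigma)}=1$ and the trace operator $W^{1,p}(\Sigma)\to L^p(\de D)$ is bounded (here one uses that $\Sigma$, being the difference of two Lipschitz domains with $D\Subset\Omega$, is itself a bounded Lipschitz domain), the full $W^{1,p}(\Sigma)$-norms of the $\varphi_k$ are bounded. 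By reflexivity of $W^{1,p}(\Sigma)$ and the compact embedding $W^{1,p}(\Sigma)\hookrightarrow\hookrightarrow L^p(\Sigma)$ (Rellich--Kondrachov on a bounded Lipschitz domain), up to a subsequence $\varphi_k\rightharpoonup u$ in $W^{1,p}(\Sigma)$, $\varphi_k\to u$ strongly in $L^p(\Sigma)$, and $\varphi_k\to u$ in $L^p(\de D)$ (compactness of the trace embedding). Hence $\|u\|_{L^p(\Sigma)}=1$, so $u\not\equiv0$, while weak lower semicontinuity of $\varphi\mapsto\int_\Sigma|\nabla\varphi|^p\,dx$ (convexity of $\xi\mapsto|\xi|^p$) and continuity of the boundary term under the $L^p(\de D)$ convergence give that $u$ attains the minimum. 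Replacing $u$ by $|u|$ does not increase the quotient (since $|\nabla|u||=|\nabla u|$ a.e.), so we may take $u\ge0$; then the strong Harnack inequality for the $p$-Laplacian, applied in the connected open set $\Sigma$, forces $u>0$ in $\Sigma$.

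\textbf{Euler--Lagrange equation.} For the characterization \eqref{case1intro} I would compute the first variation: for every $\psi\in W^{1,p}(\Sigma)$ the function $t\mapsto$ (Rayleigh quotient of $u+t\psi$) has a minimum at $t=0$, which yields, after normalizing $\int_\Sigma|u|^p\,dx=1$,
\[
\int_\Sigma|\nabla u|^{p-2}\nabla u\cdot\nabla\psi\,dx+\beta\int_{\de D}|u|^{p-2}u\,\psi\,d\sigma=\lambda(\beta,\Sigma)\int_\Sigma|u|^{p-2}u\,\psi\,dx.
\]
This is exactly the weak formulation of the mixed boundary value problem \eqref{case1intro}: taking $\psi\in C_c^\infty(\Sigma)$ gives $-\Delta_p u=\lambda(\beta,\Sigma)|u|^{p-2}u$ in $\Sigma$, and then integrating by parts with general $\psi$ identifies the natural boundary conditions $|\nabla u|^{p-2}\partial_\nu u=0$ on $\de\Omega$ and $|\nabla u|^{p-2}\partial_\nu u+\beta|u|^{p-2}u=0$ on $\de D$ (the two components of $\de\Sigma$ being disjoint since $D\Subset\Omega$).

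\textbf{Simplicity.} This is the part I expect to require the most care. The standard route is a hidden-convexity argument: if $u,v$ are two positive minimizers with the same normalization, consider $\sigma_t=\big((1-t)u^p+tv^p\big)^{1/p}$ and show that the Rayleigh functional $\varphi\mapsto \int_\Sigma|\nabla\varphi|^p+\beta\int_{\de D}|\varphi|^p$ restricted to the unit $L^p$-sphere is convex along the curve $t\mapsto\sigma_t$, strictly unless $u$ and $v$ are proportional; the gradient term is handled by the Díaz--Saá / hidden-convexity inequality $|\nabla\sigma_t|^p\le(1-t)|\nabla u|^p+t|\nabla v|^p$, and the boundary term is convex in $\varphi^p$ as well since $|\sigma_t|^p=(1-t)u^p+tv^p$ pointwise on $\de D$. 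Since $u$ and $v$ are both minimizers, convexity forces equality throughout, and the equality case in the hidden-convexity inequality yields $u=cv$. Alternatively one can use Picone's inequality: testing the equation for $u$ with $v^p/u^{p-1}$ and the equation for $v$ with $u^p/v^{p-1}$ and subtracting gives $0\ge \int_\Sigma\big(|\nabla u|^p+|\nabla v|^p-\cdots\big)\ge0$ with equality iff $u\parallel v$; the boundary terms cancel precisely because the Robin datum is linear in $|\varphi|^{p-2}\varphi$. Care is needed to justify that $v^p/u^{p-1}\in W^{1,p}(\Sigma)$ is an admissible test function, which uses $u>0$ together with, if necessary, a truncation/regularization and the local $C^{1,\alpha}$ regularity of $p$-harmonic-type functions; once admissibility is in place the argument is routine. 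The main obstacle is thus the rigorous handling of the test functions and the equality cases rather than any conceptual difficulty, the mixed boundary condition being benign because both the Neumann part and the Robin part are compatible with these manipulations.
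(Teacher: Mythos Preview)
The paper does not actually give a proof of this proposition; it merely states that ``the following result can be proved by a standard argument of Calculus of Variations.'' Your proposal is a correct and fully fleshed-out version of precisely that standard argument: the direct method with compactness of the trace for existence, the first variation for the Euler--Lagrange system, and the hidden-convexity/Picone approach for simplicity. So there is nothing to compare against, and your write-up supplies exactly what the paper leaves to the reader.
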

A trivial upper bound for $\lambda(\beta,\Sigma)$ is given, by choosing constant test functions, by
\[
\lambda(\beta,\Sigma)\le \beta \frac{P(D)}{\left|\Sigma\right|}.
\]
Another simple upper bound for $\lambda(\beta,\Sigma)$ is given by
\[
\lambda(\beta,\Sigma) \le \Lambda(\Sigma),
\]
where 
\[
\Lambda(\Sigma)=  \min\left\{ \dfrac{\ds\int _{\Sigma}|\nabla \varphi|^p\;dx}{\ds\int_{\Sigma}|\varphi|^p\;dx},\; \varphi\in W^{1,p}(\Sigma),\; \left.\varphi\right|_{\de D}=0\right\},
\]
that is the first eigenvalue of the corresponding Dirichlet-Neumann problem. 

\begin{prop}
Let $\beta>0$, $D,\Omega$ Lipschitz domains with $D\Subset \Omega$, and $\Sigma=\Omega\setminus \bar D$. The following properties hold.
\begin{enumerate}
\item  The first eigenvalue $\lambda(\beta,\Sigma)$ is differentiable and nondecreasing in $\beta>0$. Moreover,
\[
\frac{d}{d\beta} \lambda(\beta,\Sigma)= \frac{\ds\int_{\de D} |u|^{p}d\sigma}{\ds\int_{\Sigma} |u|^{p}dx}.
\]
where $u$ is a minimizer of \eqref{minpb}.
\item It holds that
\begin{equation}
\label{limiteinf}
\lim _{\beta\to +\infty} \lambda(\beta,\Sigma)=\Lambda(\Sigma).
\end{equation}
\item The first eigenvalue $\lambda(\beta,\Sigma)$ is concave with respect to $\beta$.
\end{enumerate}
\end{prop}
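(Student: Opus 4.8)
The plan is to establish the three properties in order, treating $\lambda(\beta,\Sigma)$ as a function of the scalar parameter $\beta>0$ via the variational characterization \eqref{minpb}.

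First, for monotonicity and the derivative formula I would use the envelope-type argument that is standard for min-type functionals depending smoothly on a parameter. Write $R_\beta(\varphi) = \left(\int_\Sigma|\nabla\varphi|^p\,dx + \beta\int_{\de D}|\varphi|^p\,d\sigma\right)\big/\int_\Sigma|\varphi|^p\,dx$, so that $\lambda(\beta,\Sigma) = \min_\varphi R_\beta(\varphi)$. For fixed $\varphi$, the map $\beta\mapsto R_\beta(\varphi)$ is affine and nondecreasing (since $\int_{\de D}|\varphi|^p\,d\sigma\ge 0$), hence $\lambda(\beta,\Sigma)$, being a pointwise infimum of nondecreasing affine functions, is nondecreasing. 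For differentiability: if $u=u_\beta$ is the (positive, simple, hence essentially unique) minimizer normalized by $\int_\Sigma|u_\beta|^p\,dx=1$, then one has the one-sided bounds
\[
\frac{\lambda(\beta+h,\Sigma)-\lambda(\beta,\Sigma)}{h}\le \int_{\de D}|u_\beta|^p\,d\sigma \quad (h>0),
\]
by using $u_\beta$ as a competitor at level $\beta+h$, and the reverse inequality with $u_{\beta+h}$; letting $h\to 0^+$ and using continuity of $\beta\mapsto u_\beta$ in $W^{1,p}$ (which follows from simplicity of the eigenvalue plus a standard compactness argument) gives the right derivative, and symmetrically the left derivative, both equal to $\int_{\de D}|u_\beta|^p\,d\sigma\big/\int_\Sigma|u_\beta|^p\,dx$ after removing the normalization.

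Second, for \eqref{limiteinf} I would argue monotone convergence to a supremum. Since $\lambda(\beta,\Sigma)$ is nondecreasing in $\beta$ and bounded above by $\Lambda(\Sigma)$, the limit $L=\lim_{\beta\to+\infty}\lambda(\beta,\Sigma)\le\Lambda(\Sigma)$ exists. For the reverse inequality, take minimizers $u_\beta$ with $\int_\Sigma|u_\beta|^p=1$; then $\int_\Sigma|\nabla u_\beta|^p + \beta\int_{\de D}|u_\beta|^p = \lambda(\beta,\Sigma)\le L$, so $\{u_\beta\}$ is bounded in $W^{1,p}(\Sigma)$ and $\int_{\de D}|u_\beta|^p\,d\sigma\le L/\beta\to 0$. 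Extracting a weakly convergent subsequence $u_\beta\rightharpoonup u$ in $W^{1,p}$ with strong $L^p(\Sigma)$ and $L^p(\de D)$ convergence (compact trace embedding), we get $\int_\Sigma|u|^p=1$, $u|_{\de D}=0$, hence $u$ is admissible for $\Lambda(\Sigma)$, and by weak lower semicontinuity $\Lambda(\Sigma)\le\int_\Sigma|\nabla u|^p\le\liminf\int_\Sigma|\nabla u_\beta|^p\le L$. Combining, $L=\Lambda(\Sigma)$.

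Third, concavity in $\beta$ is immediate from the same structural observation: $\lambda(\beta,\Sigma)=\inf_\varphi R_\beta(\varphi)$ is an infimum of functions $\beta\mapsto R_\beta(\varphi)$ each of which is affine (in particular concave) in $\beta$, and a pointwise infimum of concave functions is concave. No further work is needed here. The only genuinely delicate point in the whole proposition is the continuity $\beta\mapsto u_\beta$ used in the differentiability claim; this is where I expect the main effort, and it is handled by combining the simplicity of $\lambda(\beta,\Sigma)$ (first Proposition) with a compactness/uniqueness argument showing that any limit of minimizers along $\beta_k\to\beta$ is again the normalized minimizer at $\beta$. Alternatively, one can bypass continuity of $u_\beta$ entirely: concavity plus monotonicity already force differentiability of $\lambda(\cdot,\Sigma)$ except on an at most countable set, and the envelope inequalities above pin the value of the derivative wherever it exists; a separate convexity/semicontinuity argument then upgrades this to differentiability everywhere.
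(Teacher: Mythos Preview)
Your proposal is correct and, for parts (1) and (2), follows essentially the same route as the paper: the envelope/two-sided comparison argument for the derivative, compactness plus simplicity to pass to the limit in $\int_{\de D}|u_{\beta+h}|^p\,d\sigma$, and for (2) the upper bound $\lambda(\beta,\Sigma)\le\Lambda(\Sigma)$ together with weak $W^{1,p}$-compactness, trace compactness, and lower semicontinuity to identify the limit.

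The one place you diverge is (3). The paper proves concavity via the tangent-line inequality $\lambda(\beta,\Sigma)\le\lambda(\beta_0,\Sigma)+\frac{d}{d\beta}\lambda(\beta_0,\Sigma)(\beta-\beta_0)$, obtained by testing at level $\beta$ with the eigenfunction $u_0$ at $\beta_0$ and then invoking the already-proved derivative formula from (1). Your argument---``infimum of affine functions of $\beta$ is concave''---is the same idea at the structural level but is logically independent of (1): you get concavity for free before computing any derivative. This is a mild simplification, and it also supports the alternative route you sketch at the end (concavity $\Rightarrow$ differentiability off a countable set, then upgrade), which the paper does not pursue.
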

 
\begin{proof}
Let us denote by
\[
Q(\beta,\varphi)=\dfrac{\ds\int_{\Sigma}|\nabla \varphi|^p\;dx+\beta\int_{\de D}  |\varphi |^{p}d\sigma}{\ds\int_{\Sigma}|\varphi|^p\;dx},\qquad \varphi\in W^{1,p}(\Sigma).
\]
Let $u_{\beta}$, $u_{\beta+h}$ be two positive minimizers of \eqref{minpb} associated to $\beta$ and $\beta+h$, respectively. Moreover, suppose $\int_{\Sigma}u_{\beta}^{p}dx=\int_{\Sigma}u_{\beta+h}^{p}dx=1$. Then,
\begin{equation*}
\lambda(\beta+h,\Sigma)-\lambda(\beta,\Sigma)\ge\lambda(\beta+h,\Sigma)- Q(\beta,u_{\beta+h}) = h\int_{\de D}u_{\beta+h}^{p}d\sigma.
\end{equation*}
On the other hand,
\[
\lambda(\beta+h,\Sigma)-\lambda(\beta,\Sigma) \le Q(\beta+h,u_{\beta})-\lambda(\beta,\Sigma) = h\int_{\de D}u_{\beta}^{p}d\sigma.
\]
Finally,
\[
\int_{\de D}u_{\beta+h}^{p}d\sigma\le\frac{\lambda(\beta+h,\Sigma)-\lambda(\beta,\Sigma)}{h}\le 
\int_{\de D}u_{\beta}^{p}d\sigma.
\]
Then, being $\left\{u_{\beta+h}\right\}_{h}$ bounded in $W^{1,p}(\Omega)$, there exists a subsequence, still denoted by $u_{\beta+h}$ such that $u_{\beta+h}\to u_\beta$ strongly in $L^p$ and almost everywhere and $\nabla u_\beta\rightharpoonup \nabla u_\beta$ weakly in $L^p(\Omega)$. As a consequence, by the compactness of the
trace operator (see for example \cite[Cor. 18.4]{L}), $u_{\beta+h}$ converges strongly to $u_\beta$ in $L^p(\de D)$.

The monotonicity of $\lambda(\beta,\Sigma)$ with respect to $\beta$ is obvious.

Now we show the continuity of $\lambda$ with respect to $\beta$. Define
\[
Q(\beta,\varphi)=\dfrac{\ds\int_{\Sigma}|\nabla \varphi|^p\;dx+\beta\int_{\de D}  |\varphi |^{p}d\sigma}{\ds\int_{\Sigma}|\varphi|^p\;dx},\quad \varphi\in W^{1,p}(\Sigma).
\]
Let $\varphi_{k}$ be a minimizing sequence for $\lambda(\beta,\Sigma)$, normalized with $\|\varphi_{k}\|_{L^{p}(\Sigma)}=1$. Then $\int_{\de D}\varphi_{k} d\sigma \le C= C(\Sigma,\beta)$. Hence
\[
\lambda(\beta,\Sigma)\le \lambda(\beta+\eps,\Sigma) \le Q(\beta+\eps,\varphi_{k}) \le Q(\beta,\varphi_{k}) +\eps C.
\]
Passing to the limit as $k\to +\infty$, we get
\[
0\le \lambda(\beta+\eps,\Sigma)-\lambda(\beta,\Sigma)\le \eps C.
\]

Let $u_{\beta}$ be a minimizer for $\lambda(\beta,\Sigma)$, with $\int_{\Sigma}\left|u_{\beta}\right|^{p}dx=1$. Then being
\[
\lambda(\beta,\Sigma)=\int_{\Sigma}\left|\nabla u_{\beta}\right|^{p}dx+\beta \int_{\de D} \left|u_{\beta}\right|^{p}d\sigma \le \lambda(\Sigma),
\]
if $\beta_{j}\to +\infty$ as $j\to +\infty$, there exists $u_{\infty}\in W^{1,p}(\Sigma)$ such that, up to a subsequence,
\[
u_{\beta_{j}}\rightharpoonup u_{\infty} \text{ in }W^{1,p}(\Sigma),\; u_{\beta_{j}}\to  u_{\infty}\equiv 0  \text{ in }L^{p}(\de D),\;u_{\beta_{j}}\to  u_{\infty} \text{ in }L^{p}(\Sigma).
\]
Hence by lower semicontinuity,
\[
\lambda(\Sigma) \le \int_{\Sigma} \left|\nabla u_{\infty}\right|^{p}dx \le \liminf_{j} \lambda(\beta_{j},\Sigma) \le \lambda(\Sigma),
\]
that gives \eqref{limiteinf}.

We finally prove that $\lambda(\beta,\Sigma)$ is concave in $\beta$. Indeed, for fixed $\beta_0\in\mathbb{R}$, we have to show that
	\begin{equation}\label{concavity}
		\lambda(\beta,\Sigma)\leq\lambda(\beta_0,\Sigma)+\frac{d}{d\beta}\lambda(\beta_0,\Omega)\left(\beta-\beta_0\right),
	\end{equation}
	for every $\beta>0$. 
	Let $u_0$ the eigenfunction associated to $\lambda(\beta_0,\Sigma)$ and normalized such that  $\int_{\Sigma}u_0^p\;dx=1$.
Hence, we have 
	\begin{equation}\label{c}
		\lambda(\beta,\Sigma)\le \ds\int_{\Sigma}|D{u_0}|^p\;dx+\beta\int_{\de D}|{u_0}|^p\;d\mathcal{H}^{n-1}.
	\end{equation}
	Now, summing and subtracting to the right hand side of \eqref{c}, $\beta_0\int_{\de D}|u_0|^pd\mathcal{H}^{n-1}$, taking into account that we have just showed that \[ \frac{d}{d\beta}\lambda(\beta_0,\Omega)=\int_{\de D}|{u_0}|^p\;d\mathcal{H}^{n-1},\] we obtain \eqref{concavity}. 

\end{proof}
\paragraph{The radial case}
\begin{prop} 
Let be $\beta>0$, $R_2> R_1>0$ and let $v$ be a positive minimizer of problem \eqref{minpb} on the spherical shell $\Sigma=A_{R_1,R_2}$. Then $v$ is radially symmetric, in the sense that $v(x)=\psi(|x|)$. Moreover, $\psi'(r)>0$.
\end{prop}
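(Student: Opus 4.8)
The plan is to prove radial symmetry by a symmetrization/uniqueness argument, and then to derive the monotonicity of the profile from the equation itself. For the symmetry, I would argue that any minimizer $v$ gives rise, via spherical rearrangement on the annulus, to a radial competitor whose Rayleigh quotient is no larger; combined with the simplicity of $\lambda(\beta,A_{R_1,R_2})$ (stated in the first Proposition of Section 2.1), equality must hold and $v$ must already be radial. More precisely, let $v^\star$ denote the function obtained by rearranging $v$ on $A_{R_1,R_2}$ so that it is radially symmetric and nondecreasing in $|x|$, with the super-level sets $\{v>t\}$ replaced by annuli $A_{R_1,\rho(t)}$ of the same measure, i.e. sharing the \emph{inner} boundary $\partial B_{R_1}$. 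Then $\int_{A}|v^\star|^p\,dx=\int_A|v|^p\,dx$ and, by the coarea formula together with the isoperimetric inequality applied to the outer level surfaces, $\int_A|\nabla v^\star|^p\,dx\le\int_A|\nabla v|^p\,dx$. Since the rearrangement keeps the distribution of the trace on $\partial B_{R_1}$ fixed (the inner sphere is a level set of $v^\star$ and its value equals $\essinf_{\partial B_{R_1}} $ of the relevant rearranged trace in the sense that the boundary integral does not increase), one gets $\beta\int_{\partial B_{R_1}}|v^\star|^p\,d\sigma\le \beta\int_{\partial B_{R_1}}|v|^p\,d\sigma$, hence $Q(\beta,v^\star)\le Q(\beta,v)=\lambda(\beta,A)$, so $v^\star$ is also a minimizer; by simplicity $v=v^\star$ up to a constant, so $v$ is radial and nondecreasing.

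With $v(x)=\psi(|x|)$ in hand, I would then upgrade ``nondecreasing'' to the strict inequality $\psi'(r)>0$ using the ODE satisfied by $\psi$. Writing \eqref{case1intro} in polar coordinates, $\psi$ solves $-(r^{n-1}|\psi'|^{p-2}\psi')'=\lambda\, r^{n-1}\psi^{p-1}$ on $(R_1,R_2)$ with $\psi'(R_2)=0$ and $|\psi'(R_1)|^{p-2}\psi'(R_1)=\beta\,\psi(R_1)^{p-1}>0$ (using $\psi>0$, which follows from the strong maximum principle / Harnack for the $p$-Laplacian applied to the positive minimizer). In particular $\psi'(R_1)>0$. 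Integrating the equation from $r$ to $R_2$ and using $\psi'(R_2)=0$ gives
\[
r^{n-1}|\psi'(r)|^{p-2}\psi'(r)=\lambda\int_r^{R_2} s^{n-1}\psi(s)^{p-1}\,ds>0\qquad\text{for all }r\in(R_1,R_2),
\]
since $\psi>0$ and $\lambda>0$; hence $\psi'(r)>0$ on all of $(R_1,R_2)$, which is the claim. (This integrated identity also re-proves monotonicity directly, so the rearrangement step is only needed to establish that $v$ is radial in the first place.)

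The main obstacle is the first step: one must be careful that the spherical rearrangement on an annulus is set up so that it does not increase the \emph{boundary} term on $\partial B_{R_1}$, not merely the gradient term — the standard Faber–Krahn-type rearrangement fixes volume and controls the Dirichlet energy, but here the Robin term on the inner sphere must be handled simultaneously. The key is to rearrange ``from inside out,'' keeping $\partial B_{R_1}$ as the innermost level set so that the trace on $\partial B_{R_1}$ after rearrangement is the constant $\min$-level value, and to invoke the fact that for the inner sphere, which is a boundary component, the relevant comparison is between a constant and a general trace with the same sub-level measure — the inequality $\int_{\partial B_{R_1}}|v|^p\ge |\partial B_{R_1}|\,(\text{that constant})^p$ going the right way. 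Once this bookkeeping is done correctly, everything else — coarea, isoperimetry, and the convexity inequality $t\mapsto t^{p/(p-1)}$ hidden in the standard estimate $\int|\nabla v^\star|^p\le\int|\nabla v|^p$ — is routine, and simplicity of the first eigenvalue closes the argument.
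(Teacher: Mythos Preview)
Your monotonicity argument (the integrated ODE identity using $\psi'(R_2)=0$ and $\lambda,\psi>0$) is correct and is exactly what the paper does.

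The issue is the radial symmetry step. The paper obtains it in one line: the annulus and the Rayleigh quotient are invariant under rotations, so if $v$ is a positive minimizer then $v\circ R$ is also a positive minimizer for every rotation $R$; by the simplicity statement of Section~2.1, $v\circ R$ is a positive scalar multiple of $v$, and since rotations preserve the $L^p$ norm that scalar is $1$. Hence $v$ is rotation-invariant, i.e.\ radial. No rearrangement is needed.

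Your rearrangement route, by contrast, has a genuine gap at precisely the place you flag as ``the main obstacle''. The P\'olya--Szeg\H{o} step $\int_A|\nabla v^\star|^p\le\int_A|\nabla v|^p$ for a radial monotone rearrangement on the annulus $A=A_{R_1,R_2}$ rests on a relative isoperimetric inequality of the form
\[
\mathcal{H}^{n-1}\bigl(\partial\{v<t\}\cap A\bigr)\ \ge\ \mathcal{H}^{n-1}(\partial B_{\rho(t)}),
\]
where $A_{R_1,\rho(t)}$ has the same measure as $\{v<t\}\cap A$. This is false in general: if the sub-level set happens to be a small ball sitting in the interior of $A$, its relative perimeter is arbitrarily small, while $\mathcal{H}^{n-1}(\partial B_{\rho(t)})$ is close to $\mathcal{H}^{n-1}(\partial B_{R_1})$, which is not small. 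Concentric spheres are \emph{not} the relative isoperimetric sets in an annulus, so the coarea/isoperimetry estimate does not go through without already knowing something about the level-set geometry of $v$. (There is also a cosmetic slip: if $v^\star$ is nondecreasing in $|x|$ then it is the \emph{sub}-level sets, not the super-level sets, that form the inner annuli $A_{R_1,\rho(t)}$.) The clean fix is simply to replace the rearrangement argument by rotational invariance plus simplicity; once $v$ is radial, your ODE computation for $\psi'>0$ goes through verbatim.
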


\begin{proof}
	The radial symmetry of the minimizers follows from the rotational invariance of problem \eqref{minpb} and the simplicity of $\lambda(\beta, A_{R_1,R_2})$. 
	
	Now, let $\psi$ be a positive radial eigenfunction. We observe that $\psi$ solves
\begin{equation*}
\begin{cases}
-\dfrac{1}{r^{n-1}}\left(|\psi'(r)|^{p-2} \psi'(r)r^{n-1}\right)'=\lambda (\beta, A_{R_1,R_2})\,\psi^{p-1}(r)\quad \text{if}\  r\in(R_1,R_2),\vspace{0.2cm}\\
  \psi'(R_2) =0, \vspace{0.2cm}\\
|\psi'(R_1)|^{p-2}\psi'(R_1)+\beta\psi^{p-1}(R_1)=0.
\end{cases}
\end{equation*}
For every $r\in(R_1,R_2)$ it holds that
\begin{equation*}
-\dfrac{1}{r^{n-1}}\left(|\psi'(r)|^{p-2} \psi'(r)r^{n-1}\right)'=\lambda (\beta, A_{R_1,R_2})\psi^{p-1}(r)>0,
\end{equation*}
and, as a consequence,
\begin{equation*}
\left(|\psi'(r)|^{p-2} \psi'(r)r^{n-1}\right)'<0.
\end{equation*}
Taking into account the boundary conditions $\psi'(R_2)=0$, it follows by integrating that $\psi'(r)>0$, and this concludes the proof.
\end{proof}

\subsection{The $p$-torsional rigidity}
Similarly to the case of the first eigenvalue, the following results hold.
\begin{prop}
Let $\beta>0$.  There exists a unique positive maximizer $u_{\Sigma}\in W^{1,p}(\Sigma)$ of \eqref{minpbtor} which solves \eqref{case1introtor}, and any other maximizer of \eqref{case1introtor} is a scalar multiple of $u_{\Sigma}$.
\end{prop}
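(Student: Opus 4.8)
The plan is to run the direct method of the Calculus of Variations on the functional in \eqref{minpbtor}, and then to exploit strict convexity of the $p$-Dirichlet energy (plus the boundary term) to get uniqueness, in exact parallel with the eigenvalue case. First I would homogenize: since the Rayleigh-type quotient in \eqref{minpbtor} is $0$-homogeneous, maximizing it is equivalent to solving
\[
\min\left\{ \int_{\Sigma}|\nabla\varphi|^p\,dx+\beta\int_{\de D}|\varphi|^p\,d\sigma \;:\; \varphi\in W^{1,p}(\Sigma),\ \int_{\Sigma}|\varphi|\,dx=1\right\},
\]
and I would call this minimum value $m>0$; then $T(\beta,\Sigma)=m^{-p+1}$, consistently with the formula stated in the introduction. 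Note $m>0$ because if $\int_\Sigma|\nabla\varphi|^p=0$ and $\int_{\de D}|\varphi|^p\,d\sigma=0$ then $\varphi$ is constant on the connected set $\Sigma$ with zero trace on $\de D$, hence $\varphi\equiv 0$, contradicting the normalization.

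Next I would take a minimizing sequence $\varphi_k$. The functional controls $\|\nabla\varphi_k\|_{L^p(\Sigma)}$ and $\|\varphi_k\|_{L^p(\de D)}$; combined with a Poincaré-type inequality on the connected Lipschitz set $\Sigma$ (controlling $\|\varphi_k\|_{L^p(\Sigma)}$ by $\|\nabla\varphi_k\|_{L^p(\Sigma)}+\|\varphi_k\|_{L^p(\de D)}$, which holds since the only constant with vanishing $\de D$-trace is zero), this bounds $\varphi_k$ in $W^{1,p}(\Sigma)$. Passing to a subsequence, $\varphi_k\rightharpoonup u_\Sigma$ weakly in $W^{1,p}(\Sigma)$, strongly in $L^p(\Sigma)$ (Rellich) hence strongly in $L^1(\Sigma)$ so that $\int_\Sigma|u_\Sigma|\,dx=1$, and strongly in $L^p(\de D)$ by compactness of the trace operator (the same \cite[Cor.\ 18.4]{L} cited earlier). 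Weak lower semicontinuity of $\varphi\mapsto\int_\Sigma|\nabla\varphi|^p$ and continuity of the boundary term then give that $u_\Sigma$ is a minimizer. Replacing $u_\Sigma$ by $|u_\Sigma|$ leaves the functional unchanged, so we may take $u_\Sigma\ge 0$; writing the Euler--Lagrange equation gives $-\Delta_p u_\Sigma=c$ in $\Sigma$ for a positive constant $c$ (the Lagrange multiplier), with the stated Neumann and Robin conditions, and rescaling $u_\Sigma$ by a constant normalizes $c=1$, yielding \eqref{case1introtor}; strict positivity in $\Sigma$ and up to $\de\Omega$ follows from the strong maximum principle / Harnack inequality for the $p$-Laplacian (e.g.\ Vázquez), since $u_\Sigma\ge 0$ is $p$-superharmonic and not identically zero.

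For uniqueness I would argue by strict convexity. The map $\varphi\mapsto\int_\Sigma|\nabla\varphi|^p\,dx+\beta\int_{\de D}|\varphi|^p\,d\sigma$ is convex on $W^{1,p}(\Sigma)$ and strictly convex modulo additive constants on $\Sigma$ that vanish on $\de D$ — i.e.\ strictly convex outright — because $t\mapsto|t|^p$ is strictly convex and $|\nabla\cdot|^p$ is strictly convex in the gradient for $p>1$; the admissible set $\{\int_\Sigma|\varphi|\,dx=1,\ \varphi\ge0\}$ is convex. Hence if $u_1,u_2$ are two nonnegative minimizers, $(u_1+u_2)/2$ is admissible and the strict convexity forces $\nabla u_1=\nabla u_2$ a.e.\ and $u_1=u_2$ on $\de D$, so $u_1-u_2$ is a constant with zero trace on $\de D$, thus $u_1=u_2$. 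Dropping the sign constraint, any maximizer $v$ of \eqref{minpbtor} has $|v|$ a maximizer with the same quotient, and by the equality case in $\int|\nabla|v||^p\le\int|\nabla v|^p$ together with connectedness one gets $v$ of one sign, hence $v=\pm\|v\|\,u_\Sigma/\|u_\Sigma\|$, i.e.\ a scalar multiple of $u_\Sigma$. The main obstacle is purely technical — making the Poincaré/trace inequalities on the connected Lipschitz domain $\Sigma$ precise enough to get coercivity, and invoking the correct regularity ($C^{1,\alpha}_{\mathrm{loc}}$ and boundary estimates) to justify the strong maximum principle; the convexity argument for uniqueness is then completely routine.
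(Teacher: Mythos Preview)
The paper does not actually prove this proposition: it merely states ``Similarly to the case of the first eigenvalue, the following results hold,'' deferring (as for the eigenvalue proposition) to ``a standard argument of Calculus of Variations.'' Your proposal is a correct and reasonably complete implementation of precisely such a standard argument, so in spirit it matches what the paper intends.

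Two small points. First, the exponent in your formula $T(\beta,\Sigma)=m^{-p+1}$ is off: since the Rayleigh quotient in \eqref{minpbtor} is $0$-homogeneous of degree $p$ in both numerator and denominator, normalizing $\int_\Sigma|\varphi|=1$ gives $T(\beta,\Sigma)=m^{-1}$; the relation $T(\beta,\Sigma)=\big(\int_\Sigma u_\Sigma\big)^{p-1}$ from the introduction involves the specific solution of \eqref{case1introtor}, not your constrained minimizer $v$ directly, and you recover it only after computing the Lagrange multiplier and rescaling. Second, in the last step the inequality $\int_\Sigma|\nabla|v||^p\le\int_\Sigma|\nabla v|^p$ is always an equality for Sobolev functions, so there is no ``equality case'' to exploit; the clean way to conclude that any maximizer $v$ has a sign is the one you essentially have: $|v|$ is a nonnegative maximizer, hence (after normalization) equals the unique positive minimizer $u_\Sigma$, so $|v|>0$ everywhere and connectedness of $\Sigma$ forces $v$ not to change sign. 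Neither of these affects the overall correctness of your approach.
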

\begin{prop} 
Let be $\beta>0$, $r_2> r_1>0$ and let $w$ be a positive maximizer of problem \eqref{minpbtor} on the spherical shell $\Sigma=A_{r_1,r_2}$. Then $w$ is radially symmetric, $w(x)=\Psi(|x|)$, with $\Psi'(r)>0$.
\end{prop}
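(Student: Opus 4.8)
The plan is to follow exactly the scheme used above for the eigenvalue problem, relying on the preceding proposition, which guarantees that the positive maximizer of \eqref{minpbtor} is unique up to a multiplicative constant. First I would observe that the functional in \eqref{minpbtor} is invariant under the action of $O(n)$: for every rotation $\rho$ both $\int_{A_{r_1,r_2}}|\varphi|\,dx$ and $\int_{A_{r_1,r_2}}|\nabla\varphi|^p\,dx+\beta\int_{\partial B_{r_1}}|\varphi|^p\,d\sigma$ are unchanged when $\varphi$ is replaced by $\varphi\circ\rho$, since $A_{r_1,r_2}$ is rotationally symmetric. Hence $w\circ\rho$ is again a positive maximizer, so $w\circ\rho=c_\rho w$ for some $c_\rho>0$; comparing $L^1$-norms (or integrating over $A_{r_1,r_2}$) forces $c_\rho=1$ for every $\rho$, and therefore $w$ is radial, $w(x)=\Psi(|x|)$ with $\Psi$ defined on $(r_1,r_2)$.

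Next I would rewrite the boundary value problem \eqref{case1introtor} for the radial profile $\Psi$. From $-\Delta_p w=1$ in $A_{r_1,r_2}$, from the homogeneous Neumann condition on $\partial B_{r_2}=\partial\Omega$, and from the Robin condition on $\partial B_{r_1}=\partial D$, the profile $\Psi$ solves
\begin{equation*}
\begin{cases}
-\dfrac{1}{r^{n-1}}\bigl(|\Psi'(r)|^{p-2}\Psi'(r)\,r^{n-1}\bigr)'=1 & \text{if }r\in(r_1,r_2),\\[2mm]
\Psi'(r_2)=0,\\[1mm]
|\Psi'(r_1)|^{p-2}\Psi'(r_1)+\beta\,\Psi^{p-1}(r_1)=0.
\end{cases}
\end{equation*}
Then, from the first equation, $\bigl(|\Psi'(r)|^{p-2}\Psi'(r)\,r^{n-1}\bigr)'=-r^{n-1}<0$ on $(r_1,r_2)$; integrating between $r$ and $r_2$ and using $\Psi'(r_2)=0$ I would get
\begin{equation*}
|\Psi'(r)|^{p-2}\Psi'(r)\,r^{n-1}=\int_r^{r_2}s^{n-1}\,ds=\frac{r_2^{\,n}-r^{\,n}}{n}>0,
\end{equation*}
so that $\Psi'(r)>0$ for every $r\in(r_1,r_2)$, which is the assertion.

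The computation is elementary once the uniqueness result is in hand, so I do not expect a genuine obstacle; the only point deserving a word of justification is the regularity needed to work with the classical radial ODE and to perform the pointwise integration, which follows from interior $C^{1,\alpha}$ regularity for the $p$-Laplacian together with the boundedness and positivity of $w$, exactly as is implicitly used in the eigenvalue case. The remaining sign analysis and the handling of the boundary conditions are then the same bookkeeping as before.
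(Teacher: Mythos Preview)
Your proof is correct and follows precisely the scheme the paper intends: the proposition is stated without proof under the heading ``Similarly to the case of the first eigenvalue, the following results hold,'' and your argument is exactly the analogue of the paper's proof for the eigenvalue case (rotational invariance plus simplicity for radiality, then the radial ODE with the Neumann condition at $r_2$ for the monotonicity). The only cosmetic point is that the Robin condition at $r_1$ should carry a minus sign in front of $|\Psi'(r_1)|^{p-2}\Psi'(r_1)$ (the outer normal to $\Sigma$ on $\partial B_{r_1}$ points toward the origin), but since you do not use that condition in deriving $\Psi'>0$ this does not affect the argument.
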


\subsection{Quermassintegrals and the Aleksandrov-Fenchel inequalities}
Here we list some basic properties of convex analysis which will be useful in the following. For an extended discussion on the subject we refer the reader to \cite{S}.

Let $K$ be a bounded convex open set, and $B_{1}=\{x\colon \left|x\right|<1\}$. 
The outer parallel body of $K$ at distance $\rho>0$ is the Minkowski sum
\[ 
K+\rho B_1=\{ x+\rho y\in\mathbb{R}^n\;|\; x\in K,\;y\in B_1 \}.
\]
The Steiner formulas assert that
\begin{equation*}\label{general_steiner}
|K+\rho B_1|=\sum_{i=0}^{n}\binom{n}{i} W_i(\Omega_0)\rho^i,
\end{equation*}
and
\begin{equation*}\label{general_steiner_per}
P(K+\rho B_1)=n\sum_{i=0}^{n-1}\binom{n-1}{i} W_{i+1}(K)\rho^{i}. 
\end{equation*}
The coefficients $W_{i}(K)$ are known as the quermassintegrals of $K$. In particular, it holds that
\[
W_{0}(K)=\left|K\right|,\quad nW_{1}(K)= P(K), \quad W_{n}(K)=\omega_{n}.
\]

The Aleksandrov-Fenchel inequalities state that
\begin{equation}
  \label{afineq}
\left( \frac{W_j(K)}{\omega_n} \right)^{\frac{1}{n-j}} \ge \left(
  \frac{W_i(K)}{\omega_n} \right)^{\frac{1}{n-i}}, \quad 0\le i < j
\le n-1,
\end{equation}
where the inequality is replaced by an equality if and only if $K$ is a ball. 

In what follows, we use the Aleksandrov-Fenchel inequalities for
particular values of $i$ and $j$.
When $i=0$ and $j=1$, we have the classical isoperimetric inequality:
\[
P(K) \ge n \omega_n^{\frac 1 n} |K|^{1-\frac 1 n}.
\]


Let us denote by $K^{*}$ a ball such that $W_{n-1}(K)=W_{n-1}(K^{*})$. Then by Aleksandrov-Fenchel inequalities \eqref{afineq}, for $0\le i < n-1$
\[
\left(\frac{W_{i}(K)}{\omega_{n}}\right)^{\frac{1}{n-i}}\le
\frac{W_{n-1}(K)}{\omega_{n}} =
\left(\frac{W_{i}(K^{*})}{\omega_{n}}\right)^{\frac{1}{n-i}},
\]
hence 
\begin{equation*}
\label{afapp}
W_{i}(K)\le W_{i}(K^{*}),\quad 0\le i<n-1.
\end{equation*}

\section{Proof of main results}
In this Section we prove the main results (Theorem \ref{herschn} and Theorem \ref{herschntor}). 


\begin{proof}[Proof of Theorem \ref{herschn}]
Let $v(x)=\psi(\left|x\right|)$ be a positive radial solution of problem \eqref{minpb} on $A$, and denote by $v_{m}=\psi(R_{1})$ and $v_{M}=\psi(R_{2})$ be the minimum and maximum of $v$, respectively.

For $x\in \Sigma$, let us denote by $d(x)$ the distance of $x$ from $  D$,
\[
d(x)=\inf\{\left|x-y\right|, y\in D\},
\] 
and consider as test function 
\begin{equation*}\label{utest}
w(x):=
\begin{cases}
G(d(x))\quad\ \ \text{if} \  d(x)< r_2-r_1\\
v_M\qquad\qquad\text{if} \ d(x)\geq r_2-r_1,
\end{cases}
\end{equation*}
where $G$ is defined as 
\begin{equation*}
	G^{-1}(t)=\int_{v_m}^{t}\dfrac{1}{g(\tau)}\;d\tau,\qquad v_m< t<v_M,
\end{equation*}
 with  $g(t)=|Dv|_{v=t}$. We observe that $v(x)=G(|x|-R_1)$ and $w$ satisfies  the following properties: 
\begin{gather*}
w\in W^{1,p}(\Sigma)\cap C(\bar\Sigma)\\
|Dw|_{w=t}=|Dv|_{v=t}\\
w_m:=\min_{\bar\Sigma} w=v_m=G(0)\\
w_M:=\max_{\bar\Sigma} w= v_M.
\end{gather*}
Since $R_2-R_1=G^{-1}(v_M)$, to prove that the maximum value of $w$ is $v_M$, we need to verify that $G^{-1}(v_M)=\int_{v_m}^{v_M}\frac 1{g(\tau)}d \tau$. Indeed since $v(x)=\psi(\left|x\right|)$, then $\int_{v_m}^{v_M}\frac 1 {\psi'(v^{-1}(s))}ds=\psi^{-1}(v_M)-\psi^{-1}(v_m)=R_2-R_1$.

Let us denote by  
\[
E_{t}=\overline D\cup \{x\in\Sigma \colon w<t\}, \quad F_{t} =\overline B_{R_{1}}   \cup \{x\in\Sigma \colon v<t\}.
\]
Let us observe that
\[
E_{t} \subseteq \{ x\in \R^{n}\colon d(x) < G^{-1}(t) \}:=\tilde E_{t},\quad F_{t}= \left\{ x\in \R^{n}\colon \left|x\right| < R_{1}+ G^{-1}(t) \right\}.
\]
By Steiner formula and the Aleksandrov-Fenchel inequalities, we get, as $\rho= G^{-1}(t)$, that
\begin{multline}
\mathcal H^{n-1}(\de E_{t}\cap \Sigma)\le P(\tilde E_{t})=
P(D+\rho B_{1}) =n\sum_{k=0}^{n-1}\binom{n-1}{k}W_{k+1}(D)\rho^k\\
 \le n\sum_{k=0}^{n-1}\binom{n-1}{k}W_{k+1}(B_{R_{1}})\rho^k = P(B_{R_{1}}+\rho B)=P(F_{t}).
\label{perimetro_vivo}
\end{multline}
Using now the coarea formula and \eqref{perimetro_vivo}:
\begin{multline}\label{gradient_estimates_e+}
\int_{\Omega}|\nabla w|^p\;dx=
\int_{w_m}^{w_M} g(t)^{p-1}\;\mathcal{H}^{n-1}\left(\partial E_{t}\cap\Sigma \right)dt \\ 
 \leq \int_{w_m}^{w_M} g(t)^{p-1}P(\tilde E_{t})\;dt\le \int_{v_m}^{v_M} g(t)^{p-1}P(F_{t})\;dt=\int_{A}|\nabla v|^p\:dx.
\end{multline}
Since, by construction, $w(x)=w_m=v_m$ on $\de D$, then 
\begin{equation}\label{termine_bordo_me}
\int_{\de D}w^p\:d\mathcal{H}^{n-1}=w^p_m P(D)\leq v^p_m P(B_{r_1})=\int_{\de B_{r_{1}}}v^p\;d\mathcal{H}^{n-1}.
\end{equation}
Now, we define $\mu(t)=|E_{t}\cap\Sigma|$ 
and $\eta(t)=|F_{t}|$ and using again coarea formula, we obtain, for $v_m\le t<v_M$,
\begin{multline*}
\mu'(t)=\int_{\{ w=t\}\cap\Sigma}\dfrac{1}{|\nabla w(x)|}\;d\mathcal{H}^{n-1}=\dfrac{\mathcal{H}^{n-1}\left(  \partial E_{t}\cap\Sigma\right)}{g(t)}\leq \dfrac{P(\tilde E_{t})}{g(t)}\\\leq \dfrac{P(F_{t})}{g(t)}=\int_{\{ v=t\}}\dfrac{1}{|\nabla v(x)|}\;d\mathcal{H}^{n-1}=\eta'(t).
\end{multline*}
This inequality is trivially true also if $0<t<v_m$.
Since $\mu(0)=\eta(0)=0$, by integrating from $0$ to $t<v_{M}$, we have:
 \begin{equation*}\label{magmu}
\mu(t)\leq\eta(t).
\end{equation*}
On the other hand, we have
\begin{multline}\label{L_p_estimates_e+}
	\int_{\Sigma}w^p\;dx=\int_{0}^{v_M}pt^{p-1}(|\tilde\Omega| -\mu(t))dt\ge \int_{0}^{v_M}pt^{p-1}(|A|-\eta(t))\;dt= \int_{A}v^p \;dx.
\end{multline}
Using \eqref{gradient_estimates_e+}, \eqref{termine_bordo_me} and \eqref{L_p_estimates_e+}, we achieve
\begin{equation*}
\begin{split}
\lambda(\beta, \Sigma)&\leq \dfrac{\ds\int_{\Sigma}|\nabla w|^p\;dx+\beta\ds\int_{\de D}w^p\;d\mathcal{H}^{n-1}    }{\ds\int_{\Sigma}w^p\;dx} \\ 
& \leq \dfrac{\ds\int_{A}|\nabla v|^p\;dx+\beta\ds\int_{\de B_{r_1}}v^p\;d\mathcal{H}^{n-1}    }{\ds\int_{A}v^p\;dx}=\lambda (\beta, A),
\end{split}
\end{equation*}
and this conclude the proof.
\end{proof}

\begin{proof}[Proof of Theorem \ref{herschntor}]
As regards the case of the torsional rigidity, the proof follows line by line the proof of Theorem \ref{herschn} making use of the stress function $v_{A}$ instead of the first eigenfunction on the spherical shell.
\end{proof}

\begin{rem}
We stress that with the same argument it is possible to prove that if
\begin{equation*}
\label{minpbgen}
\lambda_{q}(\beta,\Sigma)=  \min\left\{ \dfrac{\ds\int _{\Sigma}|\nabla \varphi|^p\;dx+\beta\int_{\de D}  |\varphi |^{p}d\sigma}{\left(\ds\int_{\Sigma}|\varphi|^q\;dx\right)^{\frac{p}{q}}},\; \varphi\in W^{1,p}(\Sigma),\;\varphi\not\equiv 0\right\},
\end{equation*}
with $1\le q\le p$ and $1<p<+\infty$, it holds that
\[
\lambda_{q}(\beta,\Sigma)\le \lambda_{q}(\beta,A)
\]
where $A=B_{R_{2}}\setminus \bar B_{R_{1}}$ is the spherical shell with $\left|A\right|=\left|\Sigma\right|$ and $W_{n-1}(B_{R_{1}})=W_{n-1}(D)$.
\end{rem}

\section*{Acnowledgements}
This work has been partially supported by a MIUR-PRIN 2017 grant ``Qualitative and quantitative aspects of nonlinear PDE's'' and by GNAMPA of INdAM. The second author (G.P.) was also supported by Progetto di eccellenza \lq\lq Sistemi distribuiti intelligenti\rq\rq of Dipartimento di Ingegneria Elettrica e dell'Informazione \lq\lq M. Scarano\rq\rq.

\end{document}